\tikzset{
  % style to apply some styles to each segment of a path
  on each segment/.style={
    decorate,
    decoration={
      show path construction,
      moveto code={},
      lineto code={
        \path [#1]
        (\tikzinputsegmentfirst) -- (\tikzinputsegmentlast);
      },
      curveto code={
        \path [#1] (\tikzinputsegmentfirst)
        .. controls
        (\tikzinputsegmentsupporta) and (\tikzinputsegmentsupportb)
        ..
        (\tikzinputsegmentlast);
      },
      closepath code={
        \path [#1]
        (\tikzinputsegmentfirst) -- (\tikzinputsegmentlast);
      },
    },
  },
  % style to add an arrow in the middle of a path
  mid arrow/.style={postaction={decorate,decoration={
        markings,
        mark=at position .5 with {\arrow[#1]{stealth}}
      }}},
}
\theoremstyle:=definition,remark,plain\do{%
        \expandafter\g@addto@macro\csname th@\theoremstyle\endcsname{%
            \addtolength\thm@preskip\parskip
            }%
        }
\newtheorem{theorem}{Theorem}[section]
\newtheorem*{theorem*}{Theorem}
\newtheorem{lemma}[theorem]{Lemma}
\newtheorem{cor}[theorem]{Corollary}
\theoremstyle{definition}
\newtheorem*{remark*}{Remark}
\newtheorem{claim}[theorem]{Claim}
\newcommand{\mes}{\text{mes}}
\title{Collapsibility of noncover complexes of chordal graphs}
\author{Jinha~Kim}
\address{J.~Kim \\ Department of Mathematical Sciences \\ Seoul National University \\ Seoul \\ Republic of Korea} \email{kjh1210@snu.ac.kr}
\date\today
\begin{document}
\begin{abstract}
Let $G$ be a graph on $V$.
A vertex subset $S \subset V$ is called a cover of $G$ if its complement is an independent set, and $S$ is called a noncover if it is not a cover of $G$.
A noncover complex $NC(G)$ of $G$ is the simplicial complex on $V$ whose faces are noncovers of $G$.
The independence domination number $ i\gamma(G)$ of $G$ is the minimum integer $k$ such that every independent set of $G$ can be dominated by $k$ vertices.
In this note, we prove that $NC(G)$ is $(|V|- i\gamma(G)-1)$-collapsible.
\end{abstract}

\maketitle

% INTRO
\section{Introduction}
For a graph $G=(V,E)$, a vertex subset $I \subset V$ is called an {\it independent set} if there is no adjacent pair of vertices of $I$.
A {\em (vertex) cover} of $G$ is a vertex subset $W \subset V$  such that $V \setminus W$ is independent in $G$.
Note that any vertex cover of $G$ meets every edge of $G$.
$W \subset V$ is called a {\em noncover} of $G$ if it is not a cover of $G$.
A {\em noncover complex} of $G$ is a simplicial complex defined as follows:
\[NC(G) := \{W \subset V(G) : W \text{ is a noncover in } G\}.\]
In this note, we prove a bound for the ``collapsibility number'' of noncover complexes.

\subsection{Independence domination numbers}
For a vertex subset $W \subset V$ in a graph $G$, let $N(W)$ be the set of neighbors of vertices in $W$, i.e.
\[N(W) := \{v \in V: v\text{ is adjacent to }w\text{ for some }w\in W\}.\]
We say $W$ {\em dominates} a vertex $v$ in $G$ if $v \in W \cup N(W)$, and we say $W$ dominates $A \subset V$ in $G$ if $W$ dominates every vertex in $A$, i.e. $A \subset N(W)$.
We say $W$ dominates $G$, if $W$ dominates $V(G)$ in $G$.
The {\em domination number} $\gamma(G)$ is defined by
\[
\gamma(G)=\min\{|W|: W \text{ dominates } G\}.
\]
and the {\em independence domination number} $ i\gamma(G)$ is the minimum integer $k$ such that the following holds: for every independent set $I$ of $G$, there exists $W \subset V$ with $|W| \leq k$ which dominates $I$.
Obviously, $i \gamma(G) \leq \gamma(G)$ and $i \gamma(G) = \gamma(G)$ for chordal graphs \cite{ABZ02}.

In \cite{AH00} (see also \cite{M01, M03}), the following relation between the independence domination number of $G$ and homology groups of {\em independence complex}
\[
I(G) := \{W \subset V(G) : W \text{ is independent in } G\}
\]
was proved.
\begin{theorem}\label{eta}
For every graph $G$, $\tilde{H}_i(I(G)) = 0$ for all $i \geq i\gamma(G) - 2$.
\end{theorem}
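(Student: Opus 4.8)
The displayed vanishing is the Aharoni--Haxell--Meshulam connectivity bound for independence complexes, and the plan is to prove it in the form consistent with the definitions, namely $\tilde{H}_i(I(G)) = 0$ for $i \le i\gamma(G) - 2$ (equivalently, that $I(G)$ is $(i\gamma(G)-2)$-connected). This is the only tenable reading: for the perfect matching $M_n$ one has $i\gamma(M_n) = n$ while $I(M_n) \simeq S^{n-1}$, so reduced homology persists up to dimension $n-1 = i\gamma(M_n)-1$ and the inequality cannot run in the other direction. Writing $N[v] = N(v) \cup \{v\}$, the proof goes by induction on $|V(G)|$ using the Mayer--Vietoris decomposition of $I(G)$ along a single vertex $v$: $I(G)$ is the union of the subcomplex $I(G-v)$ (the independent sets avoiding $v$) and the closed star of $v$, which is the cone $\{v\} * \mathrm{lk}(v)$ and hence contractible. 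Their intersection is exactly $\mathrm{lk}(v) = I(G - N[v])$. Since the star is acyclic, Mayer--Vietoris yields, for each $i$, an exact triple $\tilde{H}_i(I(G-v)) \to \tilde{H}_i(I(G)) \to \tilde{H}_{i-1}(I(G-N[v]))$, so it suffices to kill both outer terms in the range $i \le i\gamma(G)-2$.

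To control the outer terms I would choose $v$ relative to an extremal independent set. Write $\gamma(J,G)$ for the least size of a vertex set dominating $J$ in $G$, so that $i\gamma(G) = \max_J \gamma(J,G)$ over independent sets $J$; fix an extremal $J$ with $\gamma(J,G) = i\gamma(G)$ and pick any $v \notin J$ (such $v$ exists unless $G$ is edgeless, which is handled separately). Two monotonicity facts for $i\gamma$ then drive the induction. First, deleting $v$ only removes a potential dominator, so any set dominating $J$ in $G-v$ dominates it in $G$; hence $\gamma(J,G-v) \ge \gamma(J,G)$ and therefore $i\gamma(G-v) \ge i\gamma(G)$. Second, if $W'$ dominates $J \setminus N[v]$ inside $G - N[v]$, then $W' \cup \{v\}$ dominates all of $J$ in $G$, since the vertices of $J$ adjacent to $v$ are absorbed by $v$; this gives $i\gamma(G) \le i\gamma(G-N[v]) + 1$, i.e. $i\gamma(G-N[v]) \ge i\gamma(G) - 1$.

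Combining these with the inductive hypothesis closes the argument: by induction $\tilde{H}_j(I(G-v)) = 0$ for $j \le i\gamma(G-v)-2$ and $\tilde{H}_j(I(G-N[v])) = 0$ for $j \le i\gamma(G-N[v])-2$. For $i \le i\gamma(G)-2$ the first inequality gives $i \le i\gamma(G-v)-2$, so the left term of the triple vanishes, and the second gives $i-1 \le i\gamma(G-N[v])-2$, so the right term vanishes; exactness then forces $\tilde{H}_i(I(G)) = 0$. The degenerate cases are automatic: if $G$ has an isolated vertex then no independent set containing it can be dominated, so $i\gamma(G) = \infty$ and the claim asserts that $I(G)$ is acyclic, which holds because $I(G)$ is a cone over that vertex. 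In particular this settles the edgeless base case, and when $G$ has an edge both $G-v$ and $G-N[v]$ are strictly smaller, so the induction is well founded.

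The topological part is routine: the contractibility of the star, the identification of the intersection with $I(G-N[v])$, and the Mayer--Vietoris gluing are all standard. The real content, and the step I expect to need the most care, is the combinatorial behaviour of $i\gamma$ under the two deletions together with the choice of $v$ outside an extremal independent set that simultaneously secures $i\gamma(G-v) \ge i\gamma(G)$ and $i\gamma(G-N[v]) \ge i\gamma(G)-1$. The second inequality—that deleting a closed neighbourhood lowers $i\gamma$ by at most one—is the crux, since one must check that reinstating $v$ as a single dominator genuinely repairs the domination of the deleted part for the \emph{extremal} set $J$, and must track the boundary situations in which one of the parameters becomes infinite.
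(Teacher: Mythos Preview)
The paper does not prove this theorem; it is quoted from \cite{AH00,M01,M03} as background for Corollary~\ref{eta dual}. So there is no in-paper argument to compare against. Your proposal is essentially Meshulam's proof from \cite{M03}: the Mayer--Vietoris splitting of $I(G)$ into $I(G-v)$ and the star of $v$, with intersection $I(G-N[v])$, together with the two monotonicity estimates $i\gamma(G-v)\ge i\gamma(G)$ and $i\gamma(G-N[v])\ge i\gamma(G)-1$ obtained by evaluating at an extremal independent set $J$ and choosing $v\notin J$. That argument is correct and is precisely the one the paper is citing. You are also right that the inequality in the stated theorem is misprinted; the corollary that follows (via Alexander duality) only makes sense with $i\le i\gamma(G)-2$, and your perfect-matching example confirms this.

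One small slip: under the paper's convention a vertex dominates itself ($v\in W\cup N(W)$ when $v\in W$), so an isolated vertex does \emph{not} force $i\gamma(G)=\infty$. This does not damage your proof, because the conclusion you draw from that case is still valid: if $G$ has an isolated vertex then $I(G)$ is a cone over it and hence acyclic, so $\tilde H_i(I(G))=0$ for all $i$, regardless of the value of $i\gamma(G)$. Just replace the ``$i\gamma=\infty$'' sentence with this cone observation and the boundary cases are clean.
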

The (combinatorial) Alexander dual of a simplicial complex $K$ on $V$ is the simplicial complex
$D(K) := \{\sigma \subset V: V \setminus \sigma \notin K\}.$
By observing that $NC(G) = D(I(G))$, Theorem~\ref{eta} combined with the Alexander duality theorem~\cite{BBM97} gives the following.
\begin{cor}\label{eta dual}
For every graph $G$, $\tilde{H}_j(NC(G))=0\text{ for }j \geq n- i\gamma(G)-1$ unless $E(G) = \emptyset$.
\end{cor}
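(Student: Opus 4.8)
The plan is to deduce the corollary from Theorem~\ref{eta} by combinatorial Alexander duality, making precise the observation recorded above that $NC(G)=D(I(G))$. First I would re-verify that identity face by face: a set $\sigma\subseteq V$ fails to be a cover of $G$ precisely when $V\setminus\sigma$ is not independent, i.e.\ precisely when $V\setminus\sigma\notin I(G)$, which is exactly the defining condition for $\sigma\in D(I(G))$. Hence $NC(G)$ and $D(I(G))$ are the same simplicial complex on $V$.

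Next, assume $E(G)\neq\emptyset$ and put $n=|V|$. Then $I(G)$ is a proper subcomplex of the full simplex $2^{V}$, so $D(I(G))=NC(G)$ is a nonvoid simplicial complex on $V$ and combinatorial Alexander duality~\cite{BBM97} applies, giving
\[
\tilde H_j\bigl(NC(G)\bigr)\;\cong\;\tilde H^{\,n-j-3}\bigl(I(G)\bigr)\qquad(j\in\mathbb Z).
\]
The only bookkeeping point is the degree shift: setting $k=n-j-3$, the condition $j\ge n-i\gamma(G)-1$ is equivalent to $k\le i\gamma(G)-2$. So the corollary is exactly the assertion that $\tilde H^{k}\bigl(I(G)\bigr)=0$ for every $k\le i\gamma(G)-2$, which is the vanishing range of the reduced homology of $I(G)$ supplied by Theorem~\ref{eta}, carried over from homology to cohomology. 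For the latter step, Theorem~\ref{eta} gives $\tilde H_{k}(I(G))=\tilde H_{k-1}(I(G))=0$ throughout that range of $k$, so the universal coefficient theorem forces $\tilde H^{k}(I(G))=0$ there with no $\mathrm{Ext}$ term to worry about; alternatively one may simply run the whole argument over a field, where Alexander duality takes the form $\tilde H_j(D(K);\mathbb F)\cong\tilde H_{n-j-3}(K;\mathbb F)$ and Theorem~\ref{eta} applies with no translation at all.

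Finally I would point out why the hypothesis $E(G)\neq\emptyset$ cannot be dropped: if $G$ has no edges then $I(G)=2^{V}$ and $D(I(G))$ is the void complex, which lies outside the range of validity of Alexander duality (and for which reduced homology is only a matter of convention), so this degenerate case is excluded exactly as in the statement. I do not expect a genuine obstacle in any of this: the mathematical substance is entirely contained in Theorem~\ref{eta}, and what remains is the bookkeeping of the Alexander duality isomorphism (in particular its degree shift $j\mapsto n-j-3$) together with the routine passage between homology and cohomology.
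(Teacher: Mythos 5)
Your proposal is correct and is essentially the paper's own derivation: the paper obtains the corollary exactly by the observation $NC(G)=D(I(G))$ combined with the Bj\"orner--Butler--Matveev duality $\tilde H_j(D(K))\cong\tilde H^{\,n-j-3}(K)$ and Theorem~\ref{eta}, and your write-up just makes the degree shift, the universal-coefficient step, and the exclusion of $E(G)=\emptyset$ explicit. One caveat: your bookkeeping (correctly) requires $\tilde H_k(I(G))=0$ for $k\le i\gamma(G)-2$, which is the actual Aharoni--Haxell/Meshulam statement, whereas Theorem~\ref{eta} as printed reads ``$i\ge i\gamma(G)-2$''; that inequality is evidently a misprint (it already fails for $K_2$, and the corollary would not follow from it), so you should note explicitly that you are using the intended direction rather than asserting that the printed range coincides with the one you need.
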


\subsection{Collapsibility of noncover complexes}
For a simplicial complex $K$, a face $\sigma \in K$ is said {\em free} if it is contained in a unique maximal face $\tau \in K$.
An {\em elementary $d$-collapse} of $K$ is an operation on $K$ which removes a free face $\sigma \in K$ with $|\sigma| \leq d$ and all faces containing $\sigma$.
We say $K$ is {\em $d$-collapsible} if we can obtain a void complex from $K$ by a finite sequence of elementary $d$-collapses.
Observe that an elementary $d$-collapse preserves the (non-)vanishing property of homology of dimension at least $d$.
In addition, it is important to notice that any induced subcomplex $L$ of a $d$-collapsibile complex $K$ is also $d$-collapsible.
Therefore every $d$-collapsible complex $K$ is {\em $d$-Leray}, i.e. $\tilde{H}_j(L) = 0$ holds for every induced subcomplex $L$ and $j \geq d$.

In this note, we prove the following which is stronger than Corollary~\ref{eta dual}.
\begin{theorem}\label{main}
Let $G$ be a chordal graph on $n$ vertices with no isolated vertices.
Then $NC(G)$ is $(n - i\gamma(G) - 1)$-collapsible.
\end{theorem}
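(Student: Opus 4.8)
The plan is to prove, by induction on the number of vertices $n$, a statement that is formally stronger than Theorem~\ref{main} because it also allows isolated vertices: \emph{for every chordal graph $G$ on $n$ vertices, $NC(G)$ is $(n-\gamma(G)-1)$-collapsible}. Since $i\gamma(G)=\gamma(G)$ for chordal graphs, and since $NC(G)$ is the void complex with $\gamma(G)=n$ (so the asserted bound is $-1$, which the void complex trivially satisfies) whenever $G$ has no edge, this gives Theorem~\ref{main}. Two standard facts will be used throughout: (i) the \emph{link--deletion lemma} — if, for a vertex $x$ of a complex $K$, the link $\mathrm{lk}_K(x)$ is $(d-1)$-collapsible and the deletion $\mathrm{del}_K(x)$ is $d$-collapsible, then $K$ is $d$-collapsible (lift a collapsing sequence of $\mathrm{lk}_K(x)$ to collapse all faces of $K$ that contain $x$, then collapse $\mathrm{del}_K(x)$); and (ii) coning over a new vertex does not increase the collapsibility number, $\{z\}\ast L$ being $d$-collapsible whenever $L$ is. Fact (ii) handles isolated vertices: if $z$ is isolated then $NC(G)=\{z\}\ast NC(G-z)$ and $\gamma(G)=\gamma(G-z)+1$, so the inductive hypothesis applied to $G-z$ finishes this case. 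The edgeless case being trivial, from now on $G$ has an edge and no isolated vertex; write $d:=n-\gamma(G)-1\ge 0$.

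By Dirac's theorem $G$ has a simplicial vertex $v$, which (having no isolated vertices) has a neighbour; fix one neighbour $w$ and apply the link--deletion lemma at $x=w$. One checks directly that $\mathrm{lk}_{NC(G)}(w)=NC(G-w)$; by the inductive hypothesis this is $\bigl(n-\gamma(G-w)-2\bigr)$-collapsible, hence $(d-1)$-collapsible provided $\gamma(G-w)\ge\gamma(G)$. This inequality is the point where chordality is used, via the simplicial neighbour: \emph{a vertex adjacent to a simplicial vertex is non-critical for domination}. Indeed, if $\gamma(G-w)<\gamma(G)$ and $D$ is a dominating set of $G-w$ with $|D|<\gamma(G)$, then $D$ cannot dominate $G$, hence does not dominate $w$, so $D\cap N[w]=\emptyset$; in particular $v\notin D$. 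But $D$ dominates $v$ in $G-w$, so some vertex of $N(v)\setminus\{w\}$ lies in $D$, and every such vertex is adjacent to $w$ because $N(v)$ is a clique containing $w$ — contradicting $D\cap N(w)=\emptyset$ (if $N(v)=\{w\}$ then $v$ is isolated in $G-w$ and $v\notin D$ is already absurd). So $\mathrm{lk}_{NC(G)}(w)$ is $(d-1)$-collapsible.

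The remaining, and main, task is to show $\mathrm{del}_{NC(G)}(w)$ is $d$-collapsible. I would prove more generally, by a downward induction on $|S|$: \emph{for every $S\subseteq N(w)$, the complex $\mathrm{del}_{NC(G-S)}(w)$ is $(d-|S|)$-collapsible} (the case $S=\emptyset$ is what is needed). The degree bound $\gamma(G)\le n-\deg(w)$ (the vertex $w$ dominates its $\deg(w)+1$ closed neighbours, the rest dominate themselves) guarantees $d-|S|\ge-1$, and $d-|S|\ge0$ once $S\subsetneq N(w)$. For $S=N(w)$ the vertex $w$ is isolated in $G-S$, so $\mathrm{del}_{NC(G-N(w))}(w)=NC(G-N[w])$; the outer inductive hypothesis makes this $\bigl((n-1-\deg(w))-\gamma(G-N[w])-1\bigr)$-collapsible, and since $D\cup\{w\}$ dominates $G$ for every dominating set $D$ of $G-N[w]$ we have $\gamma(G-N[w])\ge\gamma(G)-1$, so this number is at most $d-\deg(w)$. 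For $S\subsetneq N(w)$, choose $y\in N(w)\setminus S$ and apply the link--deletion lemma to $K:=\mathrm{del}_{NC(G-S)}(w)$ at $y$: because $\{w,y\}$ is still an edge of $G-S$, every $W\subseteq (V\setminus S)\setminus\{w,y\}$ satisfies $\{w,y\}\subseteq (V\setminus S)\setminus W$ and is thus a noncover, so $\mathrm{del}_K(y)$ is a full simplex ($0$-collapsible), while a short computation gives $\mathrm{lk}_K(y)=\mathrm{del}_{NC(G-(S\cup\{y\}))}(w)$, which is $(d-|S|-1)$-collapsible by the inner hypothesis; hence $K$ is $(d-|S|)$-collapsible. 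This finishes the inner induction and, with it, the proof.

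I expect the treatment of $\mathrm{del}_{NC(G)}(w)$ to be the main obstacle: unlike the link, it is not the noncover complex of any graph, so a direct argument is elusive. What makes the inner induction go through is that deleting a neighbour $y$ of $w$ makes the \emph{deletion} side of the link--deletion splitting trivial — the edge $\{w,y\}$ forces $\{w,y\}\subseteq V\setminus W$ for all relevant $W$, so that deletion is a simplex — leaving only the \emph{link} side, which has the same shape $\mathrm{del}_{NC(G')}(w)$ on a smaller chordal graph $G'$; the recursion is therefore self-similar and terminates once all of $N(w)$ has been removed, at which point $w$ is isolated and the complex becomes the honest noncover complex $NC(G-N[w])$ to which the outer induction applies.
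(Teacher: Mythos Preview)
Your proof is correct and takes a genuinely different route from the paper's. The paper attacks the problem via the \emph{minimal exclusion sequence} machinery of Matou\v{s}ek--Tancer: it fixes a carefully chosen simplicial elimination order on $V(G)$ (using Dirac's two--non-adjacent--simplicial--vertices lemma), proves a normal-form lemma (Lemma~\ref{face-form}) reducing every face to one whose complement is a star plus isolated vertices, and then carries out a lengthy case analysis---including an auxiliary Claim~\ref{claim}---to bound $|M(\sigma)|$ by $n-\gamma(G)-1$. Your argument, by contrast, is a clean double induction through the link--deletion lemma: the only place chordality enters is the observation that a neighbour $w$ of a simplicial vertex satisfies $\gamma(G-w)\ge\gamma(G)$, which handles the link, while the deletion is dealt with by the inner ``peel off $N(w)$'' induction whose point is that deleting a neighbour $y$ of $w$ makes the $\mathrm{del}$-side a full simplex. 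What your approach buys is brevity and transparency---no bespoke vertex ordering, no mes bookkeeping, and chordality is used exactly once in a one-line domination argument; what the paper's approach buys is an explicit collapsing order coming from the mes framework, and it stays within the general paradigm (Theorem~\ref{mes-coll}) that also yields Leray-type bounds in other settings. Both routes rely on $i\gamma(G)=\gamma(G)$ for chordal graphs, so neither says anything beyond chordal without new ideas.
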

The proof uses the ``minimal exclusion sequence'' technique.

\subsection{Minimal exclusion sequence}
Let $K$ be a simplicial complex on vertex set $V = \{v_1,\ldots, v_n\}$ where the vertices are ordered by a linear order $v_1 \prec v_2 \prec \cdots \prec v_n$.
Let $\sigma_1,\ldots,\sigma_m$ be the maximal faces of $K$ with a linear order $\sigma_1 \prec_f \sigma_2 \prec_f \cdots \prec_f \sigma_m$.
For each face $\sigma \in X$, we define \[i(\sigma)=\text{min}\{j \in [m]: \sigma \subset \sigma_j\}.\]
We define the {\it minimal exclusion sequence} $\mes(\sigma)=(w_1,w_2,\dots,w_{i-1})$, where $i = i(\sigma)$, as follows:

When $i = 1$, then we assume $\mes(\sigma)$ is an empty sequence.
Otherwise, when $i > 1$, we define $w_j$ for each $j\in [i-1]$ as the minimal element in
\[\begin{cases}\sigma \setminus \sigma_j&\text{ if }(\sigma \setminus \sigma_j) \cap \{w_1,\dots,w_{j-1}\}=\emptyset,
\\(\sigma \setminus \sigma_j) \cap \{w_1,\dots,w_{j-1}\}&\text{otherwise}.\end{cases}\]
Now let $M(\sigma)$ be the set of vertices appeared in the sequence $\mes(\sigma)$ and let $d(K)=\text{max}\{|M(\sigma)| : \sigma \in K\}$.
The following was obtained in \cite{MT09}. (See also \cite{Alan18}.)
\begin{theorem}\label{mes-coll}
Every simplicial complex $K$ is $d(K)$-collapsible.
\end{theorem}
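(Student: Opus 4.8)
The plan is to induct on the number $m$ of maximal faces of $K$, peeling off the last maximal face $\sigma_m$ at each stage. Write $d = d(K)$ and put $R_j := \sigma_m \setminus \sigma_j$ for $j \in [m-1]$; each $R_j$ is nonempty since $\sigma_m \not\subseteq \sigma_j$. Let $F_m = \{\sigma \in K : i(\sigma) = m\}$. Unwinding the definition, $F_m$ is exactly the set of $\sigma \subseteq \sigma_m$ with $\sigma \cap R_j \neq \emptyset$ for every $j \in [m-1]$, so it is an up-set inside the Boolean lattice $2^{\sigma_m}$ whose unique maximal face is $\sigma_m$. Enlarging a set that already meets every $R_j$ preserves this property and keeps the set inside no earlier $\sigma_j$, so every coface in $K$ of a face of $F_m$ again lies in $F_m$; hence the faces of $F_m$ can be removed from $K$ without disturbing the faces below them. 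If I can collapse $F_m$ off $K$ using elementary collapses whose free faces have size at most $d$, the residue is precisely the complex $K'$ generated by $\sigma_1, \ldots, \sigma_{m-1}$. Since $i(\cdot)$ and the entire minimal exclusion sequence of a face of $K'$ are computed from the same data in $K'$ as in $K$, one gets $M_{K'}(\sigma) = M_K(\sigma)$ and therefore $d(K') \le d$. The induction hypothesis then makes $K'$ be $d$-collapsible, and the two collapsing sequences concatenate.

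The heart of the argument is the canonical transversal $M(\sigma)$, the set of vertices occurring in $\mes(\sigma)$, which by construction is a subset of $\sigma$ meeting every $R_j$ and satisfies $|M(\sigma)| \le d$. I would first isolate two properties of $M$ on $F_m$. \emph{Local constancy}: if $u \in \sigma \setminus M(\sigma)$, then $\mes(\sigma \setminus \{u\}) = \mes(\sigma)$, hence $M(\sigma \setminus \{u\}) = M(\sigma)$; this is proved by induction on the index $j$ of the defining recursion, since deleting a vertex that is never selected changes neither the case distinction nor the minimum chosen at step $j$ (the vertex $u$ is not the selected minimum and lies in none of the prefixes $\{w_1,\dots,w_{j-1}\}$). \emph{Idempotence}: $M(M(\sigma)) = M(\sigma)$, proved by checking that running the recursion on $M(\sigma)$ reselects the same $w_j$ in the same order. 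Local constancy shows that each fibre $M^{-1}(T) = \{\sigma \in F_m : M(\sigma) = T\}$ is downward closed toward its least element, the clean transversal $T$ with $M(T)=T$, and that $|T| \le d$.

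Using these, I would realize the collapse of $F_m$ as the successive removal of the clean transversals, each as a free face of size at most $d$. Concretely, order the clean transversals by the lexicographic order that $\mes$ induces on them and process them in that order. When $T$ is processed, one argues that among the faces of $F_m$ still present, those containing $T$ all share the $M$-value $T$ (the others having been deleted earlier) and therefore, by local constancy, sit inside a single remaining maximal face $\tau_T$; thus $T$ is free, and the elementary collapse at $T$ deletes exactly the remaining cofaces of $T$. Since $|T| \le d$, this is an elementary $d$-collapse, and after all clean transversals have been processed every face of $F_m$ is gone. The same data can be packaged as an acyclic discrete Morse matching that pairs each $\sigma \neq M(\sigma)$ downward toward $M(\sigma)$ and leaves the clean transversals as the faces that are removed, giving an alternative verification.

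The main obstacle is exactly this freeness bookkeeping: a small transversal can a priori be contained in several maximal faces of the partially collapsed complex, so it need not be free, and one cannot simply strip off minimal transversals greedily. What rescues the argument is that the greedy, order-dependent selection encoded in $\mes$ is stable under deletion of non-selected vertices (local constancy) but \emph{not} under insertion — adding even the smallest non-core vertex can change $M$ — so the collapse must be organized strictly around the clean transversals and their fibres, in the precise order dictated by $\mes$, rather than around a naive pivot-toggle matching. Verifying that this order keeps the active transversal contained in a unique maximal face at the moment of its removal, while keeping every free face of size at most $d$, is the delicate step; the reduction of the first paragraph and the idempotence of the second are then routine.
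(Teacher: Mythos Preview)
The paper does not supply a proof of Theorem~\ref{mes-coll}; the result is quoted from \cite{MT09} (with a pointer to \cite{Alan18}) and used as a black box in the proof of Theorem~\ref{main}. There is thus no argument in the paper to compare yours against.

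Your sketch is essentially the standard proof and is correct in outline. One point deserves more care than you give it: you write that the remaining cofaces of a clean transversal $T$ ``by local constancy, sit inside a single remaining maximal face $\tau_T$''. Local constancy only shows that $M^{-1}(T)$ is closed under passing down toward $T$; it does not by itself force a unique top element, nor does it explain why cofaces of $T$ with a different $M$-value are already gone. What actually does the work is the lexicographic monotonicity
\[
\mes(\sigma)\ \le_{\mathrm{lex}}\ \mes(T)\qquad\text{for every }\sigma\supseteq T\text{ in }F_m,
\]
with equality iff $M(\sigma)=T$. This follows by the same coordinate-by-coordinate comparison you use for local constancy: at a ``reuse'' step the sets $(\sigma\setminus\sigma_j)\cap W_{j-1}$ and $(T\setminus\sigma_j)\cap W_{j-1}$ coincide because $W_{j-1}\subseteq M(T)=T\subseteq\sigma$, while at a ``fresh'' step $\min(\sigma\setminus\sigma_j)\le\min(T\setminus\sigma_j)$. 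This single lemma gives both halves of the freeness claim at once: if $\sigma\supseteq T$ and $M(\sigma)\neq T$ then $\mes(M(\sigma))=\mes(\sigma)<_{\mathrm{lex}}\mes(T)$, so $M(\sigma)$ is processed earlier and $\sigma$ has already been removed; and the equality case unwinds to an explicit constraint on $\sigma\setminus T$ that makes $M^{-1}(T)$ a genuine Boolean interval $[T,\tau_T]$. With that lemma stated and proved, your induction on the number of maximal faces goes through cleanly.
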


\bigskip
\section{Proof of Theorem~\ref{main}}
Let $G$ be a graph on $V = \{v_1, \ldots, v_n\}$.
Given a linear order $v_1 \prec v_2 \prec \cdots \prec v_n$ on the vertices of $G$, we consider a lexicographic order $\prec_e$ on $E(G)$ as follows:
Let $e = uv$ and $e' = u'v'$ be two distinct edges in $G$ where $u \prec v$ and $u' \prec v'$.
Then $e \prec_e e'$ if and only if either $u \prec u'$ or $u= u'$ and $v \prec v'$.
Let $E(G) = \{e_1,\ldots,e_m\}$ where $e_m \prec_e e_{m-1} \prec_e \cdots \prec_e e_1$.

Now we define a linear order on the set $F(G)$ of all maximal faces on $NC(G)$.
Since any maximal face of $NC(G)$ is the complement of an edge of $G$, we have $|F(G)| = |E(G)| = m$ and $|\sigma| = n-2$ for any $\sigma \in F(G)$.
Let $F(G) = \{\sigma_1,\ldots,\sigma_m\}$ where $\sigma_i = V\setminus e_i$.
We observe that the lexicographic order $\prec_f$ on $F(G)$ gives a linear order $\sigma_1 \prec_f \sigma_2 \prec_f \cdots \prec_f \sigma_m$.

In the following lemma, a {\em star} $S$ is a graph consists of a vertex $v$ and vertices $v_1, \ldots, v_k$ of degree $1$ where each $v_i$ is adjacent to $v$.
The vertex $v$ is called the {\em center} of $S$ and $v_1,\ldots,v_k$ are called the {\em leaves} of $S$.
In other words, a star with $k$ leaves is the complete bipartite graph $K_{1,k}$
\begin{lemma}\label{face-form}
For every $\sigma \in NC(G)$, there exists $\sigma' \in NC(G)$ such that $\mes(\sigma)=\mes(\sigma')$ and the complement $(\sigma')^c$ of $\sigma'$ induces the union of the isolated vertices $b_{k+1},\dots,b_{k+m}$ and a star with the center $a$ and leaves $b_1,\dots,b_k$. 
In addition, $a \prec b_i$ for each $1 \leq i \leq k$.
\end{lemma}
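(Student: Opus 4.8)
The plan is to produce $\sigma'$ explicitly, as the complement of a small canonical vertex set attached to $\sigma$, and then to read off the structure of its induced subgraph directly from the lexicographic order $\prec_e$. Write $T=\sigma^c$ throughout. Since $\sigma\subseteq\sigma_j$ iff $e_j\subseteq T$, and since $e_1\succ_e\cdots\succ_e e_m$, the index $i:=i(\sigma)$ is exactly the position of the $\prec_e$-largest edge contained in $T$; write $e_i=ab$ with $a\prec b$. The first thing to record is that $\mes$ is ``local'': for any face $\tau=V\setminus T'$ we have $\tau\setminus\sigma_j=e_j\setminus T'$, so a routine induction on $j$ (tracking the auxiliary vertices $w_1,\dots,w_{j-1}$ that the recursion refers back to) shows $\mes(\tau)=\mes(\sigma)$ as soon as $i(\tau)=i$ and $e_j\cap T'=e_j\cap T$ for every $j<i$.

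I would then take $T':=e_i\cup W$, where $W$ is the set of vertices $w$ such that $e_j\cap T=\{w\}$ for some $j<i$, and set $\sigma':=V\setminus T'$. It is immediate that $T'\subseteq T$ and $e_i\subseteq T'$, and that $e_j\cap T'=e_j\cap T$ for all $j<i$ — the only way equality could fail is if some $e_j$ with $|e_j\cap T|=1$ lost its unique vertex, but that vertex belongs to $W$. In particular $i(\sigma')=i$ (no $e_j$ with $j<i$ lies in $T'$, while $e_i$ does), so by the previous paragraph $\mes(\sigma')=\mes(\sigma)$, and it remains only to analyze $G[T']$.

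For this I would establish two short order-theoretic facts, both resting on the observation that every edge of $G$ contained in $T$ is $\preceq_e e_i$ while every $e_j$ with $j<i$ is $\succ_e e_i$. First, $a=\min T'$: if $e_j\cap T=\{w\}$ with $j<i$, write $e_j=uv$ with $u\prec v$; since $e_j\succ_e e_i=ab$, either $u\succ a$, in which case $w\in\{u,v\}$ is $\succ a$, or $u=a$, in which case $a\in e_j\cap T$ forces $w=a$; as $b\succ a$ and $e_i=\{a,b\}\subseteq T'$ this gives $\min T'=a$. Second, $T'\setminus\{a\}$ is independent in $G$: an edge of $G$ with both endpoints in $T'\setminus\{a\}\subseteq T$ would be some $e_\ell$ with $\ell\ge i$, yet by the first fact its smaller endpoint is $\succ a$, forcing $e_\ell\succ_e e_i$, i.e.\ $\ell<i$, a contradiction. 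Consequently every edge of $G[T']$ is incident to $a$, so $G[T']$ is the disjoint union of a star with center $a$ and leaves $N(a)\cap T'$ (a nonempty set, since $b$ is a leaf) together with the isolated vertices $T'\setminus(\{a\}\cup N(a))$, and $a$ precedes every leaf because $a=\min T'$. This is exactly the asserted form.

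The step I expect to demand the most care is the ``locality'' bookkeeping in the first paragraph: because the recursion defining $w_j$ refers back to $w_1,\dots,w_{j-1}$, one cannot simply compare the sets $\sigma\setminus\sigma_j$ and must run the induction, and one must also make sure that shrinking $T$ to $T'$ does not change $i(\sigma)$. Once that is pinned down the two order facts are essentially immediate. (Note that chordality of $G$ is not used in this lemma; it enters only later, when $|M(\sigma')|$ is bounded above by $n-i\gamma(G)-1$.)
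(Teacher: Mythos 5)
Your proof is correct, and it takes a genuinely different route from the paper. The paper argues extremally and incrementally: it takes a face whose complement is not of the desired form, runs a case analysis on $G[\sigma^c]$ (two disjoint edges; a star with a leaf preceding the center; a triangle), and in each case adds one carefully chosen vertex to $\sigma$ without changing $\mes$, repeating until the complement is a correctly ordered star plus isolated vertices. You instead produce $\sigma'$ in one shot, as $V\setminus\bigl(e_{i(\sigma)}\cup W\bigr)$ where $W$ collects the singleton traces $e_j\cap\sigma^c$ for $j<i(\sigma)$, after isolating the invariance principle that both proofs really rest on: $\mes(\sigma)$ depends only on $i(\sigma)$ and the sets $e_j\cap\sigma^c$ for $j<i(\sigma)$ (your ``locality'' induction, which you correctly flag as the step needing care, and which goes through since $\sigma\setminus\sigma_j\neq\emptyset$ for $j<i(\sigma)$). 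Your two order-theoretic facts ($a=\min T'$ because every $e_j$ with $j<i$ is $\succ_e e_i$, and independence of $T'\setminus\{a\}$ because any edge inside $T$ with smaller endpoint $\succ a$ would be $\succ_e e_i$) then give the star structure with $a$ minimal, including $k\geq 1$ since $b$ is a leaf, and $\sigma'\in NC(G)$ since $e_i\subseteq(\sigma')^c$. What each approach buys: the paper's one-vertex-at-a-time modification keeps each verification very local but requires checking that the case analysis is exhaustive and invoking an extremal/maximality argument to terminate; your construction is canonical and case-free, makes the invariance lemma explicit and reusable, and (as you note) uses no chordality, matching the generality of the lemma as stated.
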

\begin{proof}
Let $\sigma \in NC(G)$ be the maximal face such that $\sigma$ itself does not satisfies the conditions for $\sigma'$.
\begin{enumerate}
\item Suppose the induces subgraph $G[\sigma^c]$ contains two disjoint edges.

Suppose there are two disjoint edges in $G[\sigma^c]$, say $ab, cd \in E(G)$ such that $a \prec b$, $c \prec d$, and $a \prec c$.
Let $\sigma_i = \{a,b\}^c = e_i^c$ and $\sigma_j = \{c,d\}^c = e_j^c$, and define $\tau := \sigma \cup \{a\}$.
It is obvious that $\sigma_j \prec_f \sigma_i$, i.e. $j < i$, and both $\sigma$ and $\tau$ are contained in $\sigma_j$.
We claim that $\mes(\sigma) = \mes(\tau)$.
It is sufficient to show that $\sigma \setminus \sigma_k = \tau \setminus \sigma_k$ for all $1 \leq k \leq j-1$.

For each $1 \leq k \leq j-1$, let $\sigma_k = \{x,y\}^c = e_k^c$ where $x \prec y$.
Since $k < j$, it is obvious that $\sigma_k \prec_f \sigma_j$, and hence $e_j \prec_e e_k$.
Then $c \preceq x$, and it follows that $a \notin e_k$.
Thus 
$$\tau \setminus \sigma_k = \tau \cap e_k = (\sigma \cap e_k)\cup(\{a\} \cap e_k)=\sigma \cap e_k =\sigma \setminus \sigma_k.$$

Now since $|\tau| > |\sigma|$, the complement of $\tau$ induces the union of isolated vertices and a star whose center is the minimal vertex.
Since $\mes(\sigma) = \mes(\tau)$, we take $\sigma' = \tau$.

\item Suppose $\sigma^c$ induces the union of isolated vertices and a star $S$ consists of the center $a$ and at least two leaves such that there exists a leaf $b$ of $S$ with $b \prec a$.

Let $c \neq b$ be another leaf of the star $S$.
Let $\sigma_i = \{a,b\}^c = e_i^c$ and $\sigma_j = \{a,c\}^c = e_j^c$.
We claim that $\mes(\sigma) = \mes(\tau)$ where $\tau = \sigma\cup\{x\}$, $x=\min \{b,c\}$.
Then by the maximality of $\sigma$, we can take $\sigma' = \tau$.
This can be done by modifying the arguments of (i).

If $b \prec c$, then we take $x = b$ and it is enough to show that $b \notin e_k$ for every $k < j$.
For $k < j$, we have $e_j \prec_e e_k$, and it follows from $b \prec a$ and $b \prec c$ that $b \notin e_k$.
Otherwise, if $c \prec b$, then we take $x = c$ and it is enough to show that $c \notin e_k$ for every $k < i$.
For $k < i$, we have $e_i \prec_e e_k$, and it follows from $c \prec b \prec a$ that $c \notin e_k$.

\item Suppose $\sigma^c$ induces the union of isolated vertices and a triange $T$.
Let $a \prec b \prec c$ be three vertices of the triangle $T$.
Since $T$ contains the star with the center $b$ and leaves $a$ ane $c$, the arguments of (ii) shows that $\mes(\sigma) = \mes(\sigma\cup\{a\})$.
\end{enumerate}
\end{proof}

Now let $G$ be a connected chordal graph.
In \cite{Gavril}, it was shown that any chordal graph contains a {\em simplicial vertex}, which is a vertex whose neighbors induced a complete subgraph.
In addition, the following lemma was shown in \cite{Dirac61}.
\begin{lemma}\label{dirac}
If $G$ is a chordal graph which is not a complete graph, then $G$ contains at least two non-adjacent simplicial vertices.
\end{lemma}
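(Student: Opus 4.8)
The plan is to prove Lemma~\ref{dirac} by induction on $n = |V(G)|$ via the classical minimal-separator argument; since this is exactly Dirac's theorem, one could alternatively just cite \cite{Dirac61}. For $n \le 2$ a non-complete graph $G$ has no edge, so both of its vertices are isolated, hence simplicial and (trivially) non-adjacent.

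For the inductive step, pick non-adjacent vertices $a$ and $b$ (these exist because $G$ is not complete) and let $S$ be an inclusion-minimal subset of $V(G)$ whose deletion separates $a$ from $b$. Let $A$ and $B$ be the vertex sets of the components of $G - S$ containing $a$ and $b$ respectively, and put $G_A = G[A \cup S]$ and $G_B = G[B \cup S]$. The crucial bookkeeping point is that every $w \in A$ satisfies $N_G(w) \subseteq A \cup S$ (as $A$ is an entire component of $G - S$), so $w$ is simplicial in $G_A$ if and only if it is simplicial in $G$, and symmetrically for $B$. Moreover $|V(G_A)| < n$ and $|V(G_B)| < n$, since $b \notin A \cup S$ and $a \notin B \cup S$, so the induction hypothesis applies to $G_A$ and $G_B$, both of which are chordal as induced subgraphs of $G$.

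Next I would record the standard fact that $S$ induces a clique in $G$: by minimality of $S$, every vertex of $S$ has a neighbor in $A$ and a neighbor in $B$, so if $x, y \in S$ were non-adjacent, a shortest $x$--$y$ path inside $G[A \cup \{x,y\}]$ together with a shortest $x$--$y$ path inside $G[B \cup \{x,y\}]$ would form a chordless cycle of length at least $4$, contradicting chordality. Granting this, apply the induction hypothesis to $G_A$: if $G_A$ is complete then $a$ is simplicial in $G_A$, hence in $G$; if $G_A$ is not complete it has two non-adjacent simplicial vertices, and since $S$ is a clique at most one of them lies in $S$, so at least one lies in $A$ and is therefore simplicial in $G$. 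In either case $G_A$ yields a simplicial vertex of $G$ inside $A$, and symmetrically $G_B$ yields one inside $B$; these two vertices are non-adjacent because $G$ has no edge between $A$ and $B$. This completes the induction.

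The only step requiring genuine care is the clique-ness of $S$, and within that the need to choose the two $x$--$y$ paths to be \emph{induced} (by taking shortest paths in $G[A \cup \{x,y\}]$ and $G[B \cup \{x,y\}]$) so that the glued cycle has no chord; everything else is routine, with well-foundedness of the induction guaranteed by the size bounds noted above.
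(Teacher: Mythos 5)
Your proof is correct: the minimal-separator argument (minimality gives each separator vertex neighbors in both sides, chordality forces the separator to be a clique, and induction on the two pieces $G_A$, $G_B$ yields non-adjacent simplicial vertices in $A$ and $B$) is exactly the classical proof of Dirac's theorem. The paper does not prove this lemma at all --- it simply cites \cite{Dirac61} --- so your argument is essentially a faithful reproduction of the cited original proof, with the key subtleties (shortest paths taken inside $G[A\cup\{x,y\}]$ and $G[B\cup\{x,y\}]$ to ensure the glued cycle is chordless, and the size bounds making the induction well-founded) handled correctly.
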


Throughout the rest of this document, we use a linear order $v_1 \prec \cdots \prec v_n$ on $V(G)$ which is defined as follows.

Let $v_1$ be an arbitrary simplicial vertex in $G$.
Take $v_n, \ldots, v_{n-|U_1| + 1}$ so that $U_1=\{v_n, \ldots, v_{n-|U_1| + 1}\}$ is the set of all simplicial vertices distinct from $v_1$ in $G$.
If $G$ is not a complete graph, then $U_1 \neq \emptyset$ by Lemma~\ref{dirac}.
Note that if $G$ is a complete graph, then $U_1 = V(G) \setminus \{v_1\}$.
For $i \geq 2$, let $t_i = \sum_{j=1}^{i-1}|U_j|$.
Define 
\[U_i = \{v_{n-t_{i}}, v_{n-t_{i}-1},\ldots,v_{n- t_{i} - |U_{i}| + 1}\}\]
recursively so that $U_i$ is the set of all simplicial vertices in $G[V\setminus(\cup_{j=1}^{i-1}U_j)]$ distinct to $v_1$.
We observe that for every $1 \leq i \leq n$, the induced subgraph $G[\{v_1,\dots,v_i\}]$ is connected.
In the following we summarize the properties of the linear order $v_1 \prec \cdots \prec v_n$ on $V(G)$.

\begin{enumerate}[(i)]
\item\label{111} $v_1$ is a simplicial vertex of $G$.
\item\label{222} $v_i$ is a simplicial vertex of $G[V\setminus\{v_{i+1},\dots,v_n\}]$.
\item\label{333} $G[\{v_1,\dots,v_i\}]$ is connected.
\end{enumerate}

	\begingroup
	\def\thetheorem{\ref{main}}
	\begin{theorem}
	Let $G$ be a chordal graph on $n$ vertices with no isolated vertices.
Then $NC(G)$ is $(n - i\gamma(G) - 1)$-collapsible.
	\end{theorem}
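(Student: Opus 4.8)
The plan is to use the minimal exclusion sequence machinery (Theorem~\ref{mes-coll}): it suffices to show that $d(NC(G)) \leq n - i\gamma(G) - 1$, i.e. that $|M(\sigma)| \leq n - i\gamma(G) - 1$ for every face $\sigma \in NC(G)$. Since $\mes(\sigma) = \mes(\sigma')$ and $M(\sigma) = M(\sigma')$ for the specially-structured $\sigma'$ produced by Lemma~\ref{face-form}, we may assume $\sigma^c$ induces the disjoint union of isolated vertices $b_{k+1}, \dots, b_{k+m}$ together with a star centered at $a$ with leaves $b_1, \dots, b_k$, and $a \prec b_i$ for all $i$. Note $k \geq 1$ since $G$ has no isolated vertices (so $\sigma^c$ contains an edge). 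I also plan to reduce to the connected case: for a general chordal $G$ with components $G_1, \dots, G_c$, one has $i\gamma(G) = \sum i\gamma(G_t)$, and $NC(G)$ relates to the $NC(G_t)$ in a way (join-like, up to the non-edge condition) that lets the collapsibility bounds add; alternatively one works directly, since the linear-order construction and Lemma~\ref{face-form} only really need connectivity for the simplicial-vertex ordering, which is built component-wise.

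So assume $G$ is connected and fix the linear order $v_1 \prec \cdots \prec v_n$ with properties \eqref{111}--\eqref{333}. Fix $\sigma' = \sigma$ as above. The goal is a lower bound on $|\sigma' \setminus M(\sigma')|$ of the form $i\gamma(G)$, equivalently $|M(\sigma')| \leq n - i\gamma(G) - 1$; since $|\sigma'| = n - (k+m) - 1$ when $\sigma'$ is not maximal, or $|\sigma'| = n - 2$ when $k = 1, m = 0$, I instead aim to exhibit $i\gamma(G)$ vertices of $\sigma'$ that do \emph{not} appear in $\mes(\sigma')$. The key structural fact I want to extract is: every vertex $w_j$ in $\mes(\sigma') = (w_1, \dots, w_{i-1})$ lies in $\sigma' \setminus \sigma_j = \sigma' \cap e_j$, and by minimality in the definition of $\mes$, each $w_j$ is forced to be an early (small) vertex — roughly, $M(\sigma')$ is contained in $\sigma'$ together with the "small end" of the vertex order. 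Combining this with the fact that the simplicial vertices $U_1, U_2, \dots$ were placed at the \emph{large end} $v_n, v_{n-1}, \dots$, I want to show that the simplicial vertices sitting in $\sigma'$ — of which there are many — are never selected into $\mes(\sigma')$, or are dominated in a controlled way.

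The heart of the argument is the connection to $i\gamma(G)$. Since $G$ is chordal, $i\gamma(G) = \gamma(G)$. I plan to build, from the ordering, an independent set $I \subseteq \sigma'$ (or more precisely identify an independent set whose domination requires many vertices) and show $|M(\sigma')| \leq n - 1 - \gamma(G)$ by arguing that $V \setminus (M(\sigma') \cup \{\text{one more vertex}\})$ must dominate some fixed independent set, hence has size $\geq \gamma(G)$. Concretely: let $a$ be the star center in $\sigma'^c$; I claim $M(\sigma') \subseteq V \setminus (\{a\} \cup D)$ where $D$ is a minimum dominating set of the independent set $\{b_1\} \cup \{b_{k+1}, \dots, b_{k+m}\}$ (these are pairwise non-adjacent in $G$ since they are non-adjacent in $\sigma'^c$ and... one must check no edges of $G$ among them — this is where $\sigma'$ being the complement of a \emph{single} non-edge and maximality of faces, via the structure of $G[\sigma'^c]$, is used), together with a compatibility argument that none of $a$ or the dominating vertices get chosen by the minimal-exclusion rule because they are either too large in the order (simplicial leaves) or separated from the relevant edges. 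The main obstacle, and the step I expect to be most delicate, is precisely this: proving that a minimum dominator of the relevant independent set can be chosen \emph{disjoint} from $M(\sigma')$. This requires understanding exactly which vertices the $\mes$ procedure can output — leveraging that the edge order $\prec_e$ is lexicographic and that $w_j \in e_j$ with $e_j$ ranging over all edges $\prec_e e_i$ — and matching it against the freedom we have in choosing the dominating set, using simpliciality of the high-indexed vertices to push dominators into $V \setminus M(\sigma')$. I expect an induction on $n$ (peeling off a simplicial vertex $v_n$, relating $i\gamma(G)$ and $i\gamma(G - v_n)$, and relating $\mes$-values in $NC(G)$ to those in $NC(G - v_n)$) to be the cleanest way to organize this, with Lemma~\ref{face-form} invoked at each stage to keep $\sigma'$ in normal form.
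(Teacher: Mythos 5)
Your setup matches the paper's: reduce via Theorem~\ref{mes-coll} to bounding $|M(\sigma)|$, normalize $\sigma$ by Lemma~\ref{face-form}, use $i\gamma(G)=\gamma(G)$ for chordal graphs, and order the vertices by repeatedly extracting simplicial vertices. But the actual bound $|M(\sigma)|\leq n-\gamma(G)-1$ is exactly the part you have not proved, and the substitute you sketch does not work as stated. You propose to find a minimum dominating set $D$ of the independent set $\{b_1\}\cup\{b_{k+1},\dots,b_{k+m}\}$ with $(\{a\}\cup D)\cap M(\sigma')=\emptyset$. Two problems: (1) quantitatively, dominating that one independent set may require far fewer than $\gamma(G)$ vertices, so even if such a $D$ existed and were disjoint from $M(\sigma')$ you would only get $|M(\sigma')|\leq n-|D|-1$, not $n-\gamma(G)-1$ (also note $a\notin M(\sigma')$ is automatic since $M(\sigma')\subseteq\sigma'$, so adjoining $a$ buys nothing beyond one vertex); (2) the disjointness itself is precisely the delicate point, which you flag as ``the main obstacle'' and leave open. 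The paper does something genuinely different here: it extends $b_1,\dots,b_{k+m}$ to a \emph{maximal} independent set $\{b_1,\dots,b_{\gamma(G)+t}\}$ (which dominates all of $G$, giving the tie to $\gamma(G)$), and then, rather than forcing disjointness from $M(\sigma)$, it handles the hard case where many of the added vertices lie in $M(\sigma)$ by compensating: using the ordering properties \eqref{111}--\eqref{333} it produces, for each excess vertex, a distinct vertex of $\sigma\setminus M(\sigma)$ (the vertices $y_i$ from observations (a)--(c) and the $z_i$ of Claim~\ref{claim}), and closes the remaining case with a contradiction in which a dominating set of size at most $\gamma(G)-1$ is constructed via a path down to the simplicial vertex $v_1$.

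Your alternative organizational idea (induction on $n$ by deleting the last simplicial vertex $v_n$) is also only named, not carried out, and it is not routine: deleting a vertex changes the maximal faces of $NC(G)$ and the lexicographic edge order, so there is no evident relation between $\mes$-sequences in $NC(G)$ and in $NC(G-v_n)$, nor a clean inequality between $\gamma(G)$ and $\gamma(G-v_n)$ in the needed direction. Finally, your remark that $k\geq 1$ and your component-wise reduction are fine (the paper itself only treats the connected case explicitly), but as it stands the proposal is a correct frame around a missing core argument rather than a proof.
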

	\addtocounter{theorem}{-1}
	\endgroup
\begin{proof}
Since $i \gamma(G)=\gamma(G)$ for a chordal graph $G$, we will show that $NC(G)$ is $(n-\gamma(G)-1)$-collapsible.
We first show when $G$ is connected.
By Theorem~\ref{mes-coll}, it is sufficient to show $|M(\sigma)| \leq n-\gamma(G)-1$ for any $\sigma \in NC(G)$.
Take a face $\sigma \in NC(G)$.
By Lemma~\ref{face-form}, we may assume that $(\sigma)^c$ induces a star with center $a$ and leaves $b_1,\dots,b_k$ and isolated vertices $b_{k+1},\dots,b_{k+m}$, and $a \prec b_1,\dots,b_k$.

Take $b_{k+m+1},\dots,b_{\gamma(G)+t} \in V(G)$ so that $\{b_1,\dots,b_{\gamma(G)+t}\}$ is a maximal independent set of $G$.
Then $\{b_1,\dots,b_{\gamma(G)+t}\}$ dominates $G$, and hence $t\geq 0$.
If $|M(\sigma) \cap \{b_{k+m+1},\dots,b_{\gamma(G)+t}\}| \leq t$, then
\begin{align*}
|M(\sigma)| &\leq |M(\sigma)\cap (\sigma \setminus \{b_{k+m+1},\dots,b_{\gamma(G)+t}\})|+|M(\sigma)\cap \{b_{k+m+1},\dots,b_{\gamma(G)+t}\}|\\
&\leq |\sigma \setminus \{b_{k+m+1},\dots,b_{\gamma(G)+t}\}|+t=(n-\gamma(G)-t-1)+t=n-\gamma(G)-1,
\end{align*}
as required.

If $|M(\sigma) \cap \{b_{k+m+1},\dots,b_{\gamma(G)+t}\}| \geq t+1$, let 
\[\{c_1,\dots,c_{t+\alpha}\}=M(\sigma) \cap \{b_{k+m+1},\dots,b_{\gamma(G)+t}\}\] for some integer $\alpha \geq 1$.
It is sufficient to find $\alpha$ distinct elements in the set $\sigma \setminus \{b_{k+m+1},\dots,b_{\gamma(G)+t}\}-M(\sigma)$.
Let denote $\mes(\sigma)_j$ by the $j$th entry of $\mes(\sigma)$.
Since $c_i \in M(\sigma)$, we can define $j_i$ by the minimum index such that 
$\mes(\sigma)_{j_i}=c_i$ for each $i \in [t+\alpha]$.
Let $e_{j_i}=\sigma_{j_i}^c=\{c_i,y_i\}$ for some $y_i \in V(G)$.
Then $y_i \in \sigma \setminus \{b_{k+m+1},\dots,b_{\gamma(G)+t}\}$ or $y=a$.
Define two sets
\[
A:=\{c_i: y_i \in \sigma \setminus M(\sigma), i \in [t+\alpha]\}, B:=\{c_1,\dots,c_{t+\alpha}\}\setminus A.
\]
Now we observe the following.
\begin{enumerate}[(a)]
\item\label{aaa} $c_i \prec y_i$ if $y_i \in \sigma$ since $\sigma \setminus \sigma_{j_i}=\sigma \cap e_{j_i}=\{c_i,y_i\}$ and $j_i$ is the minimum index such that $\mes(\sigma)_{j_i}=c_i$.

\item\label{bbb} If $y_i,y_j \in \sigma$ and $i \ne j$, then $y_i \ne y_j$.
Note that $c_i \prec y_i$ and $c_j \prec y_j$ by \eqref{aaa}.
If $y_i=y_j$, then it contradicts to \eqref{222} since $c_i$ and $c_j$ are not adjacent in $G$.

\item\label{ccc} If $y_i \in \sigma \cap M(\sigma)$, then the vertex $a$ is adjacent to $c_i$ in $G$.
Since $y_i \in M(\sigma)$, we can define $l_i$ by the minimum index such that $\mes(\sigma)_{l_i}=y_i$.
Let $e_{l_i}=\sigma_{l_i}^c=\{y_i,y_i'\}$ for some $y_i' \in V(G)$.
Then we obtain $j_i < l_i$ since $\sigma \setminus \sigma_{j_i}=\sigma \cap e_{j_i}=\{c_i,y_i\}$ and $\mes(\sigma)_{j_i}=c_i$.
Thus we have $e_{j_i}=\{c_i,y_i\} \succ_e \{y_i,y_i'\}=e_{l_i}$ and it follows that $c_i \succ y_i'$.
Since $c_i \prec y_i$ by \eqref{aaa}, we have $y_i'\prec y_i$. 
Then by \eqref{222}, $y_i'$ and $c_i$ are adjacent in $G$.
In addition, since $l_i$ is the minimum index such that $\mes(\sigma)_{l_i}=y_i$ and $y_i' \prec y_i$, we know that $y_i' \notin \sigma$.
Since $a$ is the unique vertex in $\sigma^c$ which can be adjacent to $c_i$ in $G$, $y_i=a$.
Therefore $y_i'=a$ is adjacent to $c_i$ in $G$.
\end{enumerate}

Next we define
\[
B_1:=\{c_i \in B : N(c_i) \text{ is dominated by } \{a,b_1,\dots,b_{\gamma(G)+t}\} \setminus B\},
\]
and $B_2:=B \setminus B_1$.
Note that every $c_i \in B$ is adjacent to $a$ in $G$ by \eqref{ccc}.

\begin{claim}\label{claim}
For every $c_i \in B_2$, there exists some \[z_i \in (\sigma\setminus \{b_{k+m+1},\dots,b_{\gamma(G)+t}\})-M(\sigma)\] such that $z_i \succ a$, $z_i \ne z_j$ for two distint $c_i,c_j \in B_2$ and $z_i \notin \{y_i :c_i\in A\}$.
\end{claim}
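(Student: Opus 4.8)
The plan is to exhibit, for each $c_i \in B_2$, the required vertex $z_i$ as (essentially) a neighbour of $c_i$ witnessing $c_i \in B_2$, after pushing it up in the order $\prec$ until it escapes $M(\sigma)$. Write $D := \{a,b_1,\dots,b_{\gamma(G)+t}\}\setminus B$, so that $c_i \in B_2$ means exactly that $N(c_i)$ is not dominated by $D$. Fix $c_i \in B_2$ and choose $w \in N(c_i)$ not dominated by $D$. Since $\sigma^c = \{a,b_1,\dots,b_{k+m}\}\subseteq D$, the vertex $w$ is adjacent to no vertex of $\sigma^c$; in particular $w \neq a$ and $w\not\sim a$. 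Moreover $w \notin \{b_{k+m+1},\dots,b_{\gamma(G)+t}\}$: each such $b_\ell$ lies in $D$ when $b_\ell \notin B$, and satisfies $b_\ell \sim a \in D$ when $b_\ell \in B$ by \eqref{ccc}, so every $b_\ell$ with $\ell > k+m$ is dominated by $D$. Hence $w \in \sigma \setminus \{b_{k+m+1},\dots,b_{\gamma(G)+t}\}$.

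Next I would show $a \prec c_i \prec w$. By \eqref{ccc} we have $a \sim c_i$, and the argument there moreover gives $c_i \succ a$ whenever $y_i \in \sigma$. If $y_i = a$ then $e_{j_i} = \{c_i,a\}$ is an edge of index $j_i < i(\sigma)$; since $\sigma^c$ is not independent, $k \geq 1$, and for $b^{*} := \max_{\prec}\{b_1,\dots,b_k\}$ one has $a \prec b^{*}$ and $e_{i(\sigma)} = \{a,b^{*}\}$, so comparing $\{c_i,a\}$ with $\{a,b^{*}\}$ in the order $\prec_e$ forces $c_i \succ a$. Thus $a \prec c_i$ in every case. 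Now $a \sim c_i \sim w$ with $a \not\sim w$, while $\{u \in N(c_i): u \prec c_i\}$ is a clique by \eqref{222}; so $c_i$ is not the $\prec$-largest of $\{a,c_i,w\}$, and since $a \prec c_i$ the largest must be $w$, giving $a \prec c_i \prec w$.

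The heart of the matter is to arrange $z_i \notin M(\sigma)$. Let $Z := V\setminus(D\cup N(D))$ be the set of vertices not dominated by $D$; then $w\in Z$, and the first paragraph shows $Z \subseteq \sigma\setminus\{b_{k+m+1},\dots,b_{\gamma(G)+t}\}$. I would take $z_i$ (a first approximation, to be refined for distinctness — see below) to be the $\prec$-largest vertex of the connected component of $w$ in $G[Z]$, so that $z_i \succeq w \succ a$. Suppose $z_i \in M(\sigma)$, and let $l$ be least with $\mes(\sigma)_l = z_i$, $e_l = \{z_i,u\}$; then $z_i = \min(\sigma\cap e_l)$, and as $z_i$ is adjacent to no vertex of $\sigma^c$ we get $u \in \sigma$, $z_i \prec u$, $u \sim z_i$, and $u \notin D$ (else $z_i \sim u \in D$ contradicts $z_i\in Z$). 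If $u \in Z$, then $u$ is in the same component of $G[Z]$ as $z_i$ and $u \succ z_i$, contradicting maximality; ruling out the remaining case $u \in N(D)\setminus D$ is the principal obstacle. I would attack it via chordality: for $d\in D$ with $u\sim d$, since $z_i\not\sim d$ the common neighbourhood of $z_i$ and $d$ is a clique containing $u$; combining this with a lexicographic comparison of the edge $\{u,d\}$ against $e_{i(\sigma)}=\{a,b^{*}\}$ (and, if needed, a sharper choice of $w$ or an iteration) should yield a contradiction.

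It remains to check distinctness of the $z_i$ and disjointness from $\{y_j : c_j \in A\}$. The latter is immediate: for $c_j \in A$ we have $c_j \notin B$, hence $c_j \in D$, so $y_j \sim c_j$ places $y_j$ in $N(D)$ — it is dominated by $D$ — whereas $z_i \in Z$ is not; thus $z_i \neq y_j$. Distinctness, on the other hand, is not automatic for the bare ``component maximum'' (two distinct $c_i \in B_2$ can reach a common component of $G[Z]$), so $z_i$ should be chosen more tightly linked to $c_i$ — for instance by following a path from $c_i$ into $G[Z]$ and selecting $z_i$ along it with the help of $\prec$, $\prec_e$, and \eqref{222} — after which $z_i = z_j$ for $c_i \neq c_j$ ought to contradict $c_i \not\sim c_j$ (both lie in a maximal independent set). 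This refinement, together with the exclusion of the case $u \in N(D)\setminus D$ above, is where the real difficulty lies.
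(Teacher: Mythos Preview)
Your opening moves match the paper: you pick a neighbour $w = x_i$ of $c_i$ not dominated by $D$, check $x_i \in \sigma\setminus\{b_{k+m+1},\dots,b_{\gamma(G)+t}\}$, and establish $a \prec c_i \prec x_i$. From that point on, however, the ``component maximum in $G[Z]$'' idea leads you into a cul-de-sac rather than out of one.

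The obstacle you flag --- the partner $u$ of your maximal $z_i$ lying in $N(D)\setminus D$ --- is not a case to be \emph{ruled out}: it is precisely the case that occurs, and $u$ is the vertex you should be taking. In the paper's argument one never insists that $z_i$ stay inside $Z$. Instead, if $x_i \in M(\sigma)$ one lets $m_i$ be minimal with $\mes(\sigma)_{m_i} = x_i$ and sets $z_i := x_i'$, the other endpoint of $e_{m_i}$. One then shows directly that $x_i' \notin M(\sigma)$: if it were, with first appearance at index $k_i$ and partner $x_i''$, then $m_i < k_i$ forces $x_i'' \prec x_i \prec x_i'$; minimality of $k_i$ forces $x_i'' \notin \sigma$, hence $x_i'' \in \sigma^c \subseteq D$; and property \eqref{222} applied at $x_i'$ gives $x_i \sim x_i''$, contradicting $x_i \in Z$. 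Thus the process terminates after \emph{one} step, not an iteration inside $Z$. Your attempt to derive a contradiction from $u \in N(D)\setminus D$ cannot succeed, because the paper's $x_i'$ typically sits exactly there.

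The same concreteness resolves the two issues you leave open. For disjointness from $\{y_j : c_j \in A\}$ and for $z_i \neq z_j$, the paper does a short case analysis over the four possibilities $z_i \in \{x_i,x_i'\}$, $z_j \in \{x_j,x_j'\}$ (respectively $z_i$ versus $y_j$), each time using \eqref{222} at the $\prec$-largest vertex among the relevant three to force an adjacency that contradicts either $c_i \not\sim c_j$ (independence), $a \not\sim x_j$, or $x_i \notin N(D)$. Your observation that $y_j \in N(D)$ for $c_j \in A$ is correct and would give disjointness for free \emph{if} $z_i \in Z$ --- but since the working choice of $z_i$ need not lie in $Z$, that shortcut is unavailable, and the case analysis is what actually carries the argument.
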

We first complete the proof of Theorem~\ref{main}, and the proof of Claim~\ref{claim} will appear later.  
By \eqref{bbb} and Claim~\ref{claim},
\[
|(\sigma \setminus \{b_{k+m+1},\dots,b_{\gamma(G)+t}\})\cap M(\sigma)| \leq n-\gamma(G)-t-1-(|A|+|B_2|).
\]
If $|A|+|B_2| \geq \alpha$, then 
\begin{align*}
|M(\sigma)| &\leq n-\gamma(G)-t-1-\alpha+|M(\sigma) \cap \{b_{k+m+1},\dots,b_{\gamma(G)+t}\}|\\
&=n-\gamma(G)-t-1-\alpha+|\{c_1,\dots,c_{t+\alpha}\}|\\
&=n-\gamma(G)-t-1-\alpha+t+\alpha=n-\gamma(G)-1
\end{align*}
as required.

Thus we may assume $|A|+|B_2|\leq \alpha-1$ i.e. $|B_1|=t+\alpha-(|A|+|B_2|)\geq t+1$.
By relabeling and reordering, we can assume that $B_1=\{c_1,c_2,\dots,c_{t'}\}$ for some integer $t'$ such that $t'=|B_2| \geq t+1$.
Since $B_2$ contains at least one element $c_1$, $a$ is adjacent to both $b_1$ and $c_1$, and $b_1$ is not adjacent to $c_1$ in $G$.
Then $a$ is not a simplicial vertex of $G$ and so $a \ne v_1$.
Let $a=v_l$ for some integer $l>1$.

By \eqref{333}, there is a path $P=v_l v_{l_1} \dots v_{l_h}$ in $G$ such that $l>l_1>\cdots>l_h=1$.
Then $v_{l_1} \in \sigma$ since $v_{l_1}$ is adjacent to $a$ in $G$ and $v_{l_1} \prec v_l=a$.
First, suppose that $v_{l_1} \in \sigma \setminus \{b_{k+m+1},\dots,b_{\gamma(G)+t}\}$.
Since $v_{l_1} \prec a$, we have $v_{l_1} \notin M(\sigma)$ from the definiton of $\mes(\sigma)$.
Note that $v_{l_1}$ is distinct to any $y_i$ and $z_j$ since $v_{l_1} \prec a$.
Thus if $|A|+|B_2| \geq \alpha-1$, then $|M(\sigma)|\leq n-\gamma(G)-1$ as required.
Hence we may assume that $|A|+|B_2| \leq \alpha-2$ and then $|B_1| \geq t+2$.
Now we consider the set 
\[X = \{a,b_1,\dots,b_{\gamma(G)+t}\}-B_1.\]
Note that $\{b_1,\dots,b_{\gamma(G)+t}\}$ dominates $G$, $N(c_i)$ is dominated by $X$ for all $c_i\in B_1$, and every $c_i \in B_1$ is adjacent to $a$ in $G$.
It follows that $X$ dominates $G$.
However,
\[
|X|=\gamma(G)+t+1-|B_1| \leq \gamma(G)-1,
\]
which is a contradiction.

Next we suppose $v_{l_1} \notin \sigma \setminus \{b_{k+m+1},\dots,b_{\gamma(G)+t}\}$, i.e. $v_{l_1} \in \{b_{k+m+1},\dots,b_{\gamma(G)+t}\}$.
If $v_{l_1}\ne v_1$, then $v_{l_2} \in \sigma-\{b_{k+m+1},\dots,b_{\gamma(G)+t}\}$ since $v_{l_2}$ is adjacent to $v_{l_1}$ in $G$.
Since $v_{l_2} \prec v_{l_1} \prec v_l=a$, we have $v_{l_2} \notin M(\sigma)$.
Then by a similar argument as the above, one can obtain that $|M(\sigma)| \leq n-\gamma(G)-1$.
Thus we may assume that $v_{l_1}=v_1$.
Since $v_1$ is a simplicial vertex of $G$ by \eqref{111} and $v_1$ is adjacent to $a$ in $G$, every neighbor of $v_1$ distinct to $a$ is adjacent to $a$ in $G$.
Then the set
\[Y = \{a,b_1,\dots,b_{\gamma(G)+t}\}-(B_1\cup \{v_1\})\] 
dominates $G$.
However,
\[
|Y|=\gamma(G)+t+1-(|B_1|+1) \leq \gamma(G)-1,
\]
which is a contradiction.
\end{proof}

\begin{proof}[proof of Claim~\ref{claim}]
Take $c_i \in B_2$.
Since $c_i \notin A$, $y_i=a$ or $y_i \in \sigma \cap M(\sigma)$.
If $y_i=a$, then $c_i$ is adjacent to $y_i=a$ in $G$.
If $y_i \in \sigma \cap M(\sigma)$, then $c_i$ is adjacent to $a$ in $G$ by \eqref{ccc}.
Thus, $c_i$ is adjacent to $a$ in $G$ for both cases.
Since $c_i \notin B_1$, we can take $x_i \in N(c_i)$ such that $x_i$ is not dominated by $\{a,b_1,\dots,b_{\gamma(G)+t}\}\setminus B$.
Then $a$ is not adjacent to $x_i$.
Since $c_i \in M(\sigma)$, $a \prec c_i$ and so $c_i \prec x_i$ by \eqref{222}.
Note that $x_i \in \sigma \setminus \{b_{k+m+1},\dots,b_{\gamma(G)+t}\}$ since $x_i$ is adjacent to $c_i$ in $G$ and $x_i \ne a$.
If $x_i \notin M(\sigma)$, then let $z_i:=x_i$.
Note that $z_i=x_i \succ a$.

Now suppose $x_i \in M(\sigma)$, then we can define $m_i$ by the minimun index such that $\mes(\sigma)_{m_i}=x_i$.
Let $e_{m_i}=\sigma_{m_i}^c=\{x_i,x_i'\}$ for some $x_i' \in V(G)$.
Since $x_i$ is not dominated by $\{a,b_1,\dots,b_{\gamma(G)+t}\}\setminus B$,  $x_i' \notin \{a,b_1,\dots,b_{\gamma(G)+t}\}\setminus B$. 
We want to show that $x_i' \in \sigma \setminus \{b_{k+m+1},\dots,b_{\gamma(G)+t}\}-M(\sigma).$

To show $x_i' \in \sigma \setminus \{b_{k+m+1},\dots,b_{\gamma(G)+t}\}$, it is sufficinet to show $x_i' \notin B$.
Assume that $x_i'=c_j$ for some $c_j \in B$.
By \eqref{ccc} and the definition of $B$, $c_j$ is adjacent to $a$ in $G$ and $c_j \succ a$.
In addition, $x_i \prec x_i'$ since $x_i' \in \sigma$ and the definition of $\mes(\sigma)$.
Now, $x_i'=c_j \succ x_i,a$ but $x_i$ and $a$ are not adjacent in $G$, which contradicts to \eqref{222}.
Thus $x_i' \in \sigma \setminus \{b_{k+m+1},\dots,b_{\gamma(G)+t}\}$.

Now to show $x_i' \notin M(\sigma)$ by contradiction, assume $x_i' \in M(\sigma)$.
Then we can define $k_i$ by the minimum index such that $\mes(\sigma)_{k_i}=x_i'$, and let $e_{k_i}=\sigma_{k_i}^c=\{x_i',x_i''\}$ for some $x_i'' \in V(G)$.
Since $\sigma \setminus \sigma_{m_i}=\{x_i,x_i'\}$ and $\mes(\sigma)_{m_i}=x_i$, $m_i <k_i$.
Thus $e_{m_i}=\{x_i,x_i'\} \succ_e \{x_i',x_i''\}=e_{k_i}$ and so $x_i \succ x_i''$.
Then $x_i'' \prec x_i \prec x_i'$ and so $x_i'' \notin \sigma$.
However, $x_i' \succ x_i,x_i''$ implies that $x_i$ is adjacent to $x_i''$ in $G$ by \eqref{222}, which is a contradiction that $x_i$ is not dominatied by $\{a,b_1,\dots,b_{\gamma(G)+t}\}\setminus B$ since $x'' \in \sigma^c \subset \{a,b_1,\dots,b_{\gamma(G)+t}\}\setminus B$.
Thus, $x_i' \in \sigma \setminus \{b_{k+m+1},\dots,b_{\gamma(G)+t}\}-M(\sigma)$ if $x_i \in M(\sigma)$.
Let $z_i:=x_i'$ if $x_i \in M(\sigma)$.
Note that $z_i=x_i' \succ x_i \succ a$.

Now we will show $z_i \notin \{y_j :c_j \in A\}$ for every $c_i \in B_2$.
Assume $z_i=y_j$ for some $c_j\in A$.
If $z_i=x_i$, then $z_i=x_i \succ c_i$, $x_i=y_j \succ c_j$ but $c_i$ is not adjacent to $c_j$ in $G$, which contradicts to \eqref{222}.
If $z_i=x_i'$, then $z_i=x_i' \succ x_i$, $z_i=y_j \succ c_j$ and so $x_i$ is adjacent to $c_j$ in $G$ by \eqref{222}, which is a contradiction that $x_i$ is not dominated by $\{a,b_1,\dots,b_{\gamma(G)+t}\}\setminus B$.
Thus,  $z_i \notin \{y_i :c_i \in A\}$ for every $c_i \in B_2$.

Now, we only left to show $z_i \ne z_j$ for two distinct $c_i,c_j \in B_2$.
Assume $z_i=z_j$ for some two distinct $c_i,c_j \in B_2$.
If $z_i=x_i$ and $z_j=x_j$, then $z_i=z_j \succ c_i,c_j$ and $c_i$ and $c_j$ are not adjacent in $G$, which contradicts to \eqref{222}.
Suppose $z_i=x_i$ and $z_j=x_j'$, then $x_j'=x_i \succ x_j,c_i$ and so $x_j$ and $c_i$ are adjacent in $G$.
If $x_j \succ c_i$, then $x_j \succ c_i,c_j$ which is a contradiction that $c_i$ and $c_j$ are not adjacent in $G$ because of \eqref{222}.
If $x_j \prec c_i$, then $c_i \succ a,x_j$ which is a contradiction that $a$ and $x_j$ are not adjacent in $G$ because of \eqref{222}.
Finally, assume $z_i=x_i'$ and $z_j=x_j'$.
Then $x_i'=x_j' \succ x_i,x_j$ and so $x_i$ and $x_j$ are adjacent in $G$ by \eqref{222}.
Without loss of generality, we may assume $x_i \succ x_j$.
Then $x_i \succ x_j,c_i$ and so $x_j$ and $c_i$ are adjacent in $G$ by \eqref{222}.
If $x_j \succ c_i$, then $x_j \succ c_i,c_j$ which is a contradiction that $c_i$ and $c_j$ are not adjacent in $G$ because of \eqref{222}.
If $x_j \prec c_i$, then $c_i \succ a,x_j$ which is a contradiction that $a$ and $x_j$ are not adjacent in $G$ because of \eqref{222}.
\end{proof}

%%%%%%%%%%%%%%

%%%%%%%%%%%%%%% BIB
\bibliographystyle{alpha}

%\begin{thebibliography}{99}
%
%\end{thebibliography}

\end{document}